\newcommand{\CC}{{\mathbb C}}
\def \-{\bar}
\newtheorem{theorem}{Theorem}[section]
\newtheorem{lemma}[theorem]{Lemma}
\newtheorem{corollary}[theorem]{Corollary}
\date{}
\begin{document}

\title{\bf Submanifolds of some Hartogs domain and the complex Euclidean space
\footnote{Keywords: Hartogs domain, isometric embedding, Bergman metric, Nash algebraic function, rational function; 2010 Subject classes: 32H02, 32Q40, 53B35.}}

\author{
\ \ Xu Zhang, \ \ Donghai Ji 
}

\vspace{3cm} \maketitle

\begin{abstract}
Two K\"ahler manifolds are called relatives if they admit
a common K\"ahler submanifold with the same induced metrics.
In this paper, 
we show that a Hartogs domain over an irreducible bounded symmetric domain equipped with the Bergman metric is not a relative
to the complex Euclidean space. This generalizes the results in \cite{[CN], [CH]} and the novelty here is that the Bergman kernel of the Hartogs domain is not necessarily Nash algebraic.
\end{abstract}

\bigskip
\section{Introduction}

Holomorphic isometric embedding is an important topic in complex geometry.
Calabi \cite{[C]} obtained the celebrated results on 
the global extendability and rigidity  of a local holomorphic
isometry into a complex space form. 
 In particular, Calabi proved that any complex space form cannot
be locally isometrically embedded into another complex space form
with a different curvature sign with respect to the canonical
K\"ahler metrics, respectively. 
Along this line, Di Scala and Loi generalized Calabi's
non-embeddability  result to the case of Hermitian symmetric spaces
of different types \cite{[DL1]}. For related problems on the existence of the K\"ahler-Einstein submanifold or the K\"ahler-Ricci soliton of complex space forms, the interested readers may refer to \cite{[U1], [LM2]}. For the study of holomorphic isometric embeddings between bounded symmetric domains, the interested readers may refer to \cite{[Mo1], [Mo3], [Y1]} and references therein.

On the other hand, Umehara \cite{[U2]} studied a more general question
whether two complex space forms can share a common submanifold with
the induced metrics and proved that
two complex space forms with different curvature signs cannot share
a common K\"ahler submanifold by using Calabi's idea of diatasis. When two complex manifolds share a
common K\"ahler submanifolds with induced metrics,  they are called relatives by Di Scala and Loi
 \cite{[DL2]}. Furthermore, Di Scala and Loi proved
that a bounded domain equipped with its canonical Bergman metric can not be
a relative to a Hermitian symmetric space of compact type equipped
with the canonical metric. Since any irreducible Hermitian
symmetric space of compact type can be holomorphically isometrically
embedded into a complex project space by the classical
Nakagawa-Takagi embedding, in order to show that a
K\"ahler manifold is not a relative of a projective manifold with the
induced metric, it suffices to show that it is not a relative to the
complex projective space with the Fubini-Study metric. In fact, it
follows from the result of Umehara \cite{[U2]}, the complex Euclidean
space and the irreducible Hermitian symmetric space of compact type
cannot be relatives. Later, Huang and Yuan showed that 
 a complex Euclidean space and a Hermitian symmetric space of
noncompact type cannot be relatives \cite{[HY2]}. Cheng, Di Scala and Yuan also considered the relativity problem for indefinite complex space forms \cite{[CDY]}. 

More recently, Su, Tang and Tu showed that a complex Euclidean space and the symmetrized polydisc are not relatives \cite{[STT]}. Cheng and Hao provided a sufficient condition to determine whether a real analytic K\"ahler manifold is not a relative to a complex space form equipped with its canonical metric. As consequences, minimal domains, bounded homogeneous domains and some Hartogs domains equipped with their Bergman metrics are not relatives to the complex Euclidean space \cite{[CH]}. In these results, one key property for the canonical Bergman metrics is that the Bergman kernel functions are Nash algebraic (even rational) functions. 

In \cite{[CN]}, Cheng and Niu considered the relativity problem for the complex Euclidean space  and the Cartan-Hartogs domain. The fundamental difference in this problem is that the Bergman kernel function of the Cartan-Hartogs domain is no longer Nash algebraic. Cheng and Niu were able to show a special case that any submanifold passing the origin of the Cartan-Hartogs domain cannot be the submanifold of  the complex Euclidean space. In this paper, we solve the general case, even for the Hartogs domain over a bounded, complete circular, homogeneous, Lu Qi-Keng domain equipped with the Bergman metric.

Let $D\subset \mathbb{C}^{d}$ be a domain and $\varphi$ be a continuous positive function on $D$.
The domain
\begin{equation}\label{equ:Hartogs domain}
\Omega=\left\{(\xi, z)\in \mathbb{C}^{d_{0}}\times D : ||\xi||^{2}<\varphi(z)\right\}
\end{equation}
is called a Hartogs domain over  $D$ with $d_{0}$-dimensional fibers.
%
When  $D$ is a bounded homogeneous domain  and $\varphi(z)=K_D(z,\overline{z})^{-s}$,
Ishi, Park and Yamamori \cite{[IPY]} proved that the Bergman kernel is
\begin{equation}\label{B kernel}
K((z,\xi),\overline{(w, \eta)})
=\frac{K_{D}(z,\overline{w})^{d_{0}s+1}}{\pi^{d_{0}}}
\sum_{j=0}^{d}\frac{c(s,j)(j+d_{0})!}{(1-t)^{j+d_{0}+1}},
\end{equation}
with $t=K_{D}(z,\overline{w})^{s}<\xi, \eta>$,
where the constants $c(s,j)$ satisfy that
$F(ks)=\sum_{j=0}^{d} c(s, j)(k+1)_{j}$
and $F$ is the polynomial given by (18) in \cite{[IPY]}.
If $D$ is a bounded homogeneous domain whose Bergman kernel function
$K_{D}(z, \overline{w})$ is a rational function on $D\times\hbox{conj}(D)$,
then $K((z,\xi),\overline{(w,\eta)})$ is also a rational function
on $\Omega\times\text{conj}(\Omega)$ for $s \in \mathbb{Z}$. 
In this case, $\Omega$ equipped with the Bergman metric is not a relative to $\mathbb{C}^{n}$ equipped with the Euclidean metric (cf. \cite{[CH]}).
However, in general, for $s \not\in \mathbb{Z}$, $K((z,\xi),\overline{(w,\eta)})$ is a product of a rational function to a power and a rational function in $t$ and the problem becomes totally different.

Denote the Euclidean metric on $\CC^n$ by  $\omega_{\CC^n}$ and the Bergman metric on $\Omega$ by $\omega_\Omega$. 
We will show that  there do not simultaneously exist  holomorphic isometric immersions
  $F=(f_1, \cdots, f_n): (U, \omega_U) \rightarrow (\CC^n, \omega_{\CC^n})$ and $G=(g_1, \cdots, g_{d_0}, h_1, \cdots, h_d): (U, \omega_U) \rightarrow (\Omega, \omega_\Omega)$. 
 As a consequence,
  a complex Euclidean space and a Hartogs domain over an irreducible bounded symmetric domain
  equipped with the Bergman metric cannot be relatives.
Indeed, we prove the following slightly stronger result:

\begin{theorem} \label{main1}
Let $\Omega$ be a Hartogs domain over an irreducible bounded symmetric domain $D$ defined by (\ref{equ:Hartogs domain}).
Let $U \subset \CC$ be a connected open subset. 
Suppose that
$F: U \rightarrow \CC^n$ and $
G: U \rightarrow
\Omega$ are holomorphic
mappings such that

\begin{equation}\label{isometry}
F^*\omega_{\CC^n} = \mu G^*\omega_{\Omega} ~~\text{on}~~U
\end{equation}
 for a real  constants $\mu$. Then $F$ must be a constant map. 
 \end{theorem}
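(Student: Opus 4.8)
The plan is to translate the pointwise metric identity (\ref{isometry}) into a single functional equation for Calabi's diastasis and then to exploit the transcendental branching produced by the non-integer power of the Bergman kernel $K_D$. Writing $F=(f_1,\dots,f_n)$ and $G=(g_1,\dots,g_{d_0},h_1,\dots,h_d)$, with $g=(g_j)$ the fiber part and $h=(h_k)$ the base part (so $h:U\to D$), the metric $F^*\omega_{\CC^n}$ has local K\"ahler potential $\|F(z)\|^2=\sum_i f_i(z)\overline{f_i(z)}$ and $G^*\omega_\Omega$ has local potential $\log K\big((G(z),\overline{G(z)})\big)$. Condition (\ref{isometry}) says exactly that $\|F(z)\|^2-\mu\log K\big((G(z),\overline{G(z)})\big)$ is pluriharmonic on $U$. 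Polarizing (replacing $\overline{z}$ by an independent conjugate variable and using the hereditary property of Calabi's diastasis), this is equivalent to the identity
\[
\sum_{i=1}^n f_i(z)\overline{f_i(w)}=\mu\,\log K\big((G(z),\overline{G(w)})\big)+a(z)+\overline{a(w)}
\]
for a holomorphic function $a$ on $U$, valid for $(z,w)$ near the diagonal.

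Next I would insert the explicit kernel (\ref{B kernel}). Since $D$ is an irreducible bounded symmetric domain, its Bergman kernel factors as $K_D(z,\overline w)=c_0\,N(z,\overline w)^{-p}$, where $N$ is the irreducible generic norm polynomial and $p\in\ZZ_{>0}$ is the genus of $D$. Setting $T(z,w)=K_D(h(z),\overline{h(w)})^{s}\langle g(z),g(w)\rangle$ and $S(t)=\sum_{j=0}^d \frac{c(s,j)(j+d_0)!}{(1-t)^{j+d_0+1}}$, the kernel splits the right-hand side into a base part and a fiber part:
\[
\log K\big((G(z),\overline{G(w)})\big)=(d_0 s+1)\log K_D(h(z),\overline{h(w)})+\log S\big(T(z,w)\big)+\text{const}.
\]
Here $S$ is a non-constant rational function of $t$ whose only pole is at $t=1$, while $K_D(h(z),\overline{h(w)})^{s}=c_0^{s}N(h(z),\overline{h(w)})^{-ps}$ carries the transcendental factor $N^{-ps}$. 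I would then regard both sides as (multivalued) holomorphic functions of $(z,w)$ obtained by analytic continuation from the diagonal, and focus on the branch locus $\mathcal{Z}=\{N(h(z),\overline{h(w)})=0\}$, along which $K_D\to\infty$ and $T\to\infty$.

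The heart of the argument is a monodromy computation around $\mathcal{Z}$. The left-hand side $\sum_i f_i(z)\overline{f_i(w)}$ is single-valued, and $a$ and $\langle g(z),g(w)\rangle$ are single-valued, so continuing once around a generic point of $\mathcal{Z}$ leaves them unchanged; by contrast $\log N\mapsto \log N+2\pi i$ and $N^{-ps}\mapsto \lambda N^{-ps}$ with $\lambda=e^{-2\pi i p s}$, hence $T\mapsto \lambda T$. Subtracting the continued identity from the original forces
\[
\log S(\lambda T)-\log S(T)=2\pi i\,p\,(d_0 s+1),
\]
and, since $T$ sweeps an open set of values near $\mathcal{Z}$, this is an identity in $T$, i.e. $S(\lambda T)=e^{2\pi i p(d_0 s+1)}S(T)$. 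Comparing the unique pole of $S$ (at $t=1$) on both sides gives $1/\lambda=1$, i.e. $\lambda=1$, which means $ps\in\ZZ$. Thus in the transcendental regime $ps\notin\ZZ$ one reaches a contradiction, so $F$ must be constant. If instead $ps\in\ZZ$, then $K_D^{s}=c_0^{s}N^{-ps}$ is rational, whence the entire kernel (\ref{B kernel}) is a rational function and $\Omega$ is not a relative to $\CC^n$ by \cite{[CH]}, again forcing $F$ constant.

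The main obstacle is to make this analytic continuation rigorous: one must reach a generic smooth point of $\mathcal{Z}$ while $F$ and $G$ remain single-valued and holomorphic, so that the only monodromy comes from $N^{-ps}$ and $\log N$. This requires controlling the holomorphic extension of the isometric immersions past the diagonal, which is the delicate step where the structure of the problem, rather than a formal substitution, must be used. Two degenerate configurations should be treated separately and reduce to known results: if the fiber part $g\equiv 0$ then $G$ maps into $D$ and the statement reduces to the non-relativity of $\CC^n$ and the bounded symmetric domain $D$ \cite{[HY2]}; if the base part $h$ is constant then $G$ maps into a single fiber, a ball, on which $\log S(T)$ is the logarithm of a rational expression in holomorphic functions, so the Nash-algebraic argument of \cite{[CH]} applies. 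In all cases $F$ is forced to be constant.
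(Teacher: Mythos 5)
Your reduction to the polarized identity, and your instinct to play the single-valuedness of $\sum_i f_i(z)\overline{f_i(w)}$ against the multivaluedness of $N^{-ps}$ and $\log N$, are reasonable, and your endgame (monodromy forces $ps\in\ZZ$, in which case the kernel is rational and \cite{[CH]} applies) would be fine if the continuation were available. But there is a genuine gap exactly at the step you yourself flag as ``delicate'': the polarized identity holds only on $U\times\hbox{conj}(U)$, where $h(z)\in D$ and $N(h(z),\overline{h(w)})$ never vanishes (the Bergman kernel of $D$ is zero-free on $D\times\hbox{conj}(D)$), so your branch locus $\mathcal{Z}$ is empty in the entire region where you actually have an identity. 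To reach $\mathcal{Z}$ you would have to analytically continue $f_i$, $g_j$, $h_k$ far beyond $U$, and these are arbitrary holomorphic functions on a small planar open set: nothing forces them to extend, to be algebraic, or to remain single-valued. The monodromy computation therefore has no domain on which to act, and no care about ``generic smooth points of $\mathcal{Z}$'' repairs this; the obstruction is that the maps themselves are not known to continue.

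The paper supplies precisely the missing mechanism. It chooses a maximal algebraically independent subset $\{\phi_1,\dots,\phi_k\}$ of the components of $F$ and $G$ over the field of rational functions on $U$ (Lemma \ref{algebra}), writes every component as a Nash algebraic function of $(s,\phi_1,\dots,\phi_k)$, and then replaces $(\phi_1(s),\dots,\phi_k(s))$ by free variables $X$; algebraic independence shows the polarized identity survives this substitution (Lemma \ref{quantity}). Fixing $(s_0,t_0)$ with $\sum_i \hat f_i(s_0,X)\bar f_i(t_0)$ nonconstant in $X$ (Lemma \ref{nonconstant}) yields an identity of the form $e^{H(X)}=R\big(H_1^{\mu_1}H_2^{\mu_2}\big)\prod_j H_j^{\mu_j}$ with every $H_j$ Nash algebraic in $X$; such functions do continue along curves avoiding a finite branch set, and the exponential-versus-polynomial growth comparison at a blow-up point of a branch of $H$ (Lemma \ref{cont}) gives the contradiction. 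If you want to salvage your monodromy argument, it would have to be run after this substitution, in the variable $X$ rather than in $(z,w)$; as written, the proposal is missing the idea that makes any continuation possible.
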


By the same argument, we are able to show a slightly general result. 
Assume each $\Omega$ to be a Hartogs domain over an irreducible bounded symmetric domain defined by (\ref{equ:Hartogs domain}).
Suppose $F: U \rightarrow \CC^n$ and $
G_j: U \rightarrow
\Omega_j$ are holomorphic
mappings such that

\begin{equation}\label{isometry}
F^*\omega_{\CC^n} =\sum_{j=1}^J \mu_j G_j^*\omega_{\Omega_j} ~~\text{on}~~U
\end{equation}
 for certain real  constants $\mu_1,
\cdots, \mu_J$. Then $F$ must be a constant map. Furthermore, if all
$\mu_j's$ are positive, then  $G$ is also a constant map.

The next result follows directly from Theorem \ref{main1}.

\begin{corollary}\label{corollary}
There does not exist a K\"ahler manifold $(X, \omega_X)$ that can be
holomorphic isometrically embedded into the complex Euclidean space
 $(\CC^n, \omega_{\CC^n})$ and  also  into  a Hartogs domain $(\Omega, \omega_\Omega)$ over an irreducible bounded symmetric domain defined by (\ref{equ:Hartogs domain}).
\end{corollary}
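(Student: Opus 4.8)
We sketch a proof of Theorem~\ref{main1}; Corollary~\ref{corollary} then follows at once, since a common submanifold of $\CC^n$ and $\Omega$ restricts, along a generic holomorphic disc, to a pair of maps $F,G$ as in (\ref{isometry}) with $\mu=1$ and $F$ nonconstant. The plan is to pass from the pointwise identity (\ref{isometry}) to a functional identity via Calabi's diastasis. By the hereditary property of the diastasis under holomorphic isometric immersions, $(U,F^{*}\omega_{\CC^n})$ has intrinsic diastasis $\sum_{j}|f_j(u)-f_j(v)|^{2}$, while $(U,G^{*}\omega_\Omega)$ has intrinsic diastasis $G^{*}D_\Omega$ with $D_\Omega(x,y)=\log\frac{K(x,\bar x)K(y,\bar y)}{|K(x,\bar y)|^{2}}$. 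Hence (\ref{isometry}) gives, on $U\times U$,
\[
\sum_{j}|f_j(u)-f_j(v)|^{2}=\mu\,(d_0s+1)\,D_D(h(u),h(v))+\mu\,\log\frac{R(t_{uu})R(t_{vv})}{R(t_{uv})R(t_{vu})},
\]
where I have inserted (\ref{B kernel}), written $G=(g,h)$ with $h$ the base component, $D_D$ for the Bergman diastasis of $D$, and $R(t)=\sum_{j=0}^{d}\frac{c(s,j)(j+d_0)!}{(1-t)^{j+d_0+1}}$, $t_{uv}=K_D(h(u),\overline{h(v)})^{s}\langle g(u),g(v)\rangle$.

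Next I complexify, treating conjugate variables as independent, so the identity becomes holomorphic in $(x,y,p,q)$ on an open subset of $\CC^{4}$ and its left-hand side equals $(F(x)-F(y))\cdot(\overline F(p)-\overline F(q))$. Because $U\subset\CC$ is one-dimensional, I may apply the single mixed derivative $\partial^{2}/\partial x\,\partial p$; every summand separated in $(x,p)$ is annihilated, leaving
\[
\sum_{j}f_j'(x)\,\overline{f_j'}(p)=\mu\,(d_0s+1)\,\partial_x\partial_p\log K_D(h(x),\overline h(p))+\mu\,\partial_x\partial_p\log R(T),\qquad T=K_D(h(x),\overline h(p))^{s}\langle g(x),\overline g(p)\rangle,
\]
where the bar denotes complex-conjugate holomorphic extension. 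Note that $F$ is constant if and only if the left-hand side vanishes identically, and that the left-hand side together with the $K_D$-term is single-valued and free of fractional powers; the entire difficulty is concentrated in the last term, which carries the non-integer power $K_D^{s}$.

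To treat it, write $K_D=cN^{-\gamma}$ with $N$ the generic norm of genus $\gamma$, and set $A=N(h(x),\overline h(p))$, $B=\langle g(x),\overline g(p)\rangle$, $w=A^{-\gamma s}$, so that $T=c''wB$. The chain rule gives $\partial_x\partial_p\log R(T)=U_1P_1(T)+U_2P_2(T)$ with $U_1,U_2$ single-valued meromorphic in $(x,p)$ and $P_1,P_2$ explicit rational functions of $T$; since $R(t)=\tilde Q(t)/(1-t)^{d+d_0+1}$, the function $P_2(T)=T\,R'(T)/R(T)$ has a simple pole and $P_1$ a double pole at $T=1$. When $\gamma s\notin\ZZ$, the fractional power $w$ is not algebraic over the field of single-valued meromorphic functions generated by $A$, $B$ and the components of $F,G$; as the left-hand side lies in that field while the right-hand side is rational in $w$ over it, the $w$-dependence must cancel. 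But the poles of $P_1,P_2$ at $T=1$ produce poles in $w$ at $w=1/(c''B)$, of orders two and one with coefficients $U_1$ and $U_2$, which cannot cancel one another; hence $U_1=U_2=0$. This collapses the last term, reducing the displayed identity to $\sum_j f_j'(x)\overline{f_j'}(p)=\mu(d_0s+1)\,\partial_x\partial_p\log K_D(h(x),\overline h(p))$, i.e. $(U,F^{*}\omega_{\CC^n})$ is a rescaled common submanifold of $\CC^n$ and the bounded symmetric domain $D$. By Huang--Yuan \cite{[HY2]} this forces the left-hand side to vanish, so $F$ is constant.

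The main obstacle is precisely this transcendence step, which is exactly the failure of Nash-algebraicity of $K_D^{s}$ that obstructed \cite{[CN],[CH]}: one must establish rigorously that $N^{-\gamma s}$ is not algebraic over the function field of the maps, and then separate the coefficients $U_1,U_2$ so that the pole of the $R$-term at $T=1$ cannot be absorbed. The rational but non-integral case $\gamma s\in\mathbb{Q}\setminus\ZZ$ needs a comparison of the finitely many branches of $w$ in place of a pure transcendence argument, while the degenerate configurations (with $h$ or $g$ constant, or $T$ independent of one variable) must be routed back to the rational-kernel result \cite{[CH]} and to \cite{[HY2]}.
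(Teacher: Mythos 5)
Your derivation of Corollary~\ref{corollary} from Theorem~\ref{main1} is exactly what the paper does (restrict to a disc, get $F,G$ satisfying (\ref{isometry}) with $\mu=1$), and your passage to the functional identity via Calabi's diastasis is a legitimate variant of the paper's comparison of pure (anti)holomorphic terms. The problem is that the substance of your submission is a proof sketch of Theorem~\ref{main1} itself, and that sketch has a genuine gap at its central step --- one you yourself flag as ``the main obstacle.'' The claim that $w=N^{-\gamma s}$ ``is not algebraic over the field of single-valued meromorphic functions generated by $A$, $B$ and the components of $F,G$'' cannot be established in the form you state it, for two reasons. First, on any simply connected set where $A\neq 0$, the function $A^{-\gamma s}$ \emph{is} a single-valued holomorphic function, so a field of ``single-valued meromorphic functions'' containing $A$ cannot be argued to exclude $w$ without a much more careful definition of the field. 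Second, and more seriously, the components $f_j,g_j,h_j$ are arbitrary holomorphic functions with no a priori algebraicity, so the field they generate is not one over which transcendence of $w$ can be checked by general principles; indeed, the functional identity you are analyzing is itself a candidate algebraic relation between $w$ and that field, which is precisely what you would need to rule out. Your pole-cancellation argument at $T=1$ (forcing $U_1=U_2=0$) only becomes meaningful after this transcendence statement is secured, so the reduction to the Huang--Yuan result for $\CC^n$ versus $D$ does not go through as written.

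The paper closes exactly this gap by a different mechanism, and it is worth seeing why. Lemma~\ref{algebra} first replaces the arbitrary components of $F$ and $G$ by Nash algebraic functions $\hat\phi_j(t,X)$ of a transcendence basis $\phi_1,\dots,\phi_k$ over the rational functions in $s$; Lemma~\ref{quantity} then uses the algebraic independence of that basis to show the identity survives when the basis is replaced by \emph{free} variables $X$; only then is the identity an equation between genuinely Nash algebraic functions of $X$ with an exponential on one side, and Lemma~\ref{cont} finishes not by transcendence of the fractional power but by a growth comparison (Puiseux expansion: $|e^{H}|$ blows up exponentially at a branch/pole point while $R(H_1^{\mu_1}H_2^{\mu_2})\prod_j H_j^{\mu_j}$ grows at most polynomially). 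In other words, the fractional power $K_D^{s}$ inside the rational function $R$ is handled by an analytic estimate on algebraic functions of the auxiliary variables $X$, not by a field-theoretic argument in the two-variable function field of the maps. If you want to salvage your route, you would need to substitute something like Lemmas~\ref{algebra} and~\ref{quantity} for your transcendence claim before the pole analysis can be carried out.
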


The corollary generalizes the results in \cite{[CH], [CN]} for arbitrary $s \in \mathbb{R}$. Again, the key difference in this work from previous works is the fact that the Bergman kernel for a Hartogs domain over an irreducible bounded symmetric domain is no longer Nash algebraic, which makes the general argument in \cite{[HY2]} inapplicable (cf. \cite{[CH]}). 

We conclude the paper with a remark. One notes that all results stated above hold for the Hartogs domain $\Omega$ over a bounded, complete circular, homogeneous, Lu Qi-Keng domain equipped with the Bergman metric. All the argument works the same provided that Lemma \ref{auto} holds for such Hartogs domain and we explain there why this is the case.


\bigskip


\bigskip

\bigskip

\section{Proof of Theorem \ref{main1}}

Our  proof fundamentally uses  ideas developed in \cite{[HY1], [HY2]}. 
First, we prove a generalization of Lemma 2.2 in  \cite{[HY2]}.

  \begin{lemma}\label{cont}
  Let $V \subset \CC^k$ be a connected open set. Let $H_1(\xi), \cdots,  H_K(\xi)$
    and $H(\xi)$  be  holomorphic Nash algebraic functions on $V$ and $R(t)$ be a holomorphic rational function.
    Assume that
    
 \begin{equation}\label{form}
   \exp^{H(\xi)} =  R(H_1^{\mu_1}(\xi) H_2^{\mu_2}(\xi)) \prod_{j=3}^K  H_j^{\mu_j}(\xi), \ \xi\in V,
   \end{equation}\label{trans}
    for certain    real numbers $\mu_1, \cdots, \mu_K$. Then
    $H(\xi)$ is constant.
   \end{lemma}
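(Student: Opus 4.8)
The plan is to reduce the identity (\ref{form}) to the special case already treated in \cite{[HY2]} (namely Lemma 2.2 there), in which the exponential of a Nash algebraic function is shown to equal a product of Nash algebraic functions raised to real powers only when the exponent is constant; the genuinely new work lies in removing the rational factor $R$. First I would reduce to one variable. Since Nash algebraicity is preserved under restriction to a generic complex affine line and $R$ is a fixed rational function of a single variable, if $H$ were non-constant I may restrict $H, H_1, \dots, H_K$ to a line on which $H$ stays non-constant and thereby assume $k=1$, $V \subset \CC$, aiming for a contradiction.

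Next I would take the logarithmic derivative of (\ref{form}). Writing $w := H_1^{\mu_1} H_2^{\mu_2}$ and $P(t) := t R'(t)/R(t)$ (a rational function), differentiation gives
\begin{equation}
H'(\xi) = P(w)\Big(\mu_1 \frac{H_1'}{H_1} + \mu_2 \frac{H_2'}{H_2}\Big) + \sum_{j=3}^{K} \mu_j \frac{H_j'}{H_j}.
\end{equation}
All of $H'$ and the logarithmic derivatives $H_\ell'/H_\ell$ are Nash algebraic, so, setting $a := \mu_1 H_1'/H_1 + \mu_2 H_2'/H_2$ and $b := \sum_{j\ge 3}\mu_j H_j'/H_j$, the function $P(w)\,a = H' - b$ is Nash algebraic. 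The crux is then a three-case analysis that eliminates $R$: (i) if $a \equiv 0$, then $w' = w\,a = 0$, so $w$ and hence $R(w)$ are constant; (ii) if $P$ is a non-constant rational function, then $P(w)$ being Nash algebraic forces $w$ to satisfy a polynomial equation with Nash algebraic coefficients, so $w$, and therefore $R(w)$, is itself Nash algebraic; (iii) if $P \equiv m$ is constant, then $tR'/R \equiv m$ forces $R(t) = c\,t^m$ with $m \in \ZZ$, so $R(w) = c\,H_1^{m\mu_1} H_2^{m\mu_2}$. In every case (\ref{form}) collapses to an identity of the form $\exp^{H} = c \prod_\ell \widehat{H}_\ell^{\,\nu_\ell}$ with each $\widehat H_\ell$ Nash algebraic and $\nu_\ell \in \RR$, which is precisely the hypothesis of \cite{[HY2]}, Lemma 2.2; applying it yields that $H$ is constant, the desired contradiction.

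For completeness I would recall the mechanism of the base case: one passes to the compact Riemann surface on which the algebraic functions $\widehat H_\ell$ become meromorphic and continues the identity analytically. Near any point the right-hand side has at most power-law modulus $|\tau|^{\sum_\ell \nu_\ell\,\mathrm{ord}(\widehat H_\ell)}$ in a local coordinate $\tau$, whereas a pole of $H$ would make $\exp^{H}$ possess an essential singularity there; hence $H$ has no poles, so it is a holomorphic function on a compact Riemann surface and therefore constant.

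I expect the main obstacle to be twofold. First, in case (ii) one must justify that a holomorphic function algebraic over the field of Nash algebraic functions is again Nash algebraic, and confirm that the polynomial relation defining $w$ is non-degenerate, treating separately the possibility that $P(w)$ or $a$ vanishes on a proper subvariety. Second, the base-case growth dichotomy is delicate precisely because, for irrational $\nu_\ell$, the powers $\widehat H_\ell^{\,\nu_\ell}$ are infinitely multivalued and cannot be made single-valued on any finite branched cover; the comparison of moduli $|\exp^{H}| = e^{\mathrm{Re}\,H}$ against $\prod_\ell |\widehat H_\ell|^{\nu_\ell}$ must then be carried out branch-by-branch along sectors approaching the singular points, which is the technical heart of \cite{[HY2]} that the present reduction is designed to exploit.
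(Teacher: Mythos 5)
Your proposal is correct, but it takes a genuinely different route from the paper. The paper does not try to eliminate the rational factor $R$; it reruns the blow-up argument of Lemma 2.2 in \cite{[HY2]} directly on the identity (\ref{form}): reduce to one variable, analytically continue along curves avoiding the branch points, poles and zeros of the $H_j$'s, and observe that if some branch of $H$ has a pole at $\xi_0\in\CC\cup\{\infty\}$ then $|e^{H}|$ blows up exponentially by the Puiseux expansion, while the right-hand side --- \emph{including} the composite factor $R(H_1^{\mu_1}H_2^{\mu_2})$, since a rational function of a quantity with power-law growth still has power-law growth --- grows at most polynomially; if no branch blows up anywhere, $H$ is a bounded algebraic function and hence constant. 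You instead quarantine the new difficulty in $R$ and dispose of it algebraically: logarithmic differentiation gives $H'=P(w)a+b$ with $w=H_1^{\mu_1}H_2^{\mu_2}$, $P=tR'/R$, and the trichotomy ($a\equiv0$; $P$ non-constant, whence $w$ is algebraic over the Nash algebraic functions and hence Nash algebraic by transitivity of algebraicity; $P\equiv m\in\ZZ$, whence $R(t)=ct^m$) collapses (\ref{form}) to the exact hypothesis of \cite{[HY2]}, Lemma 2.2, which is then quoted as a black box. Your case analysis is sound (the degeneracy worries you flag --- nontriviality of the polynomial $P_1-NP_2$ when $P$ is non-constant, and $a$ vanishing only on a thin set --- resolve in the standard way), and your reduction buys a cleaner logical separation: the delicate multivalued growth comparison is never reopened. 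The paper's approach is shorter but implicitly asks the reader to re-verify that the growth estimate survives the insertion of $R$; yours is longer but makes the cited lemma do all the analytic work. Both are valid proofs.
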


   \begin{proof} We include a brief argument here and refer the readers to  the proof of Lemma 2.2 in  \cite{[HY2]} for details.   
  
   Suppose that $H$ is not constant. After a linear transformation in $\xi$ and the dimension reduction, we can assume, without loss of generality,
   that,  $H(\xi)$ is not  constant for a certain fixed $\xi_2, \cdots, \xi_k$.
   Then $H$ is a non-constant  Nash-algebraic holomorphic   function in $\xi_1$ for such fixed  $\xi_2, \cdots, \xi_k$.
   Hence, we can assume that $j=1$ to achieve a contradiction.
 Write $H=H(\xi)$ and $H_j=H_j(\xi)$ for each $1 \leq j \leq K$. 
 Use  $S \subset \CC$ to denote the union  of branch points, poles and zeros  of
     $H(\xi)$ and
     $H_j(\xi)$ for each $j$.  Given a  $p \in \CC \setminus S$ and a real curve in $\CC \setminus S$ connecting $p$ and $V$,
     by holomorphic continuation, the equation (\ref{trans}) holds on an open neighborhood of the curve.
 Suppose that  some branch $H^{(*)}$ of $H$ blows up
at $\xi_0 \in \mathbb{C} \cup \{\infty\}$. Then by Puiseux expansion of $H$ near $\xi_0$, $|e^{H^{(*)}(x)}|$  goes to infinity exponentially fast as $\xi$ tends to $\xi_0$. However, the right hand side of (\ref{trans}) grows at most polynomially. This is a contradiction.
 \end{proof}

The following key result is a consequence of Proposition 2 in \cite{[WYZR]}. Although in \cite{[WYZR]} a more general result was merely stated for the Hartogs domain over an irreducible bounded symmetric domain, if the reader applies the argument in \cite{[YLR], [ABP]} carefully, it is not difficulty to find out Lemma \ref{auto} also holds for the Hartogs domain $\Omega$ over a bounded, complete circular, homogeneous, Lu Qi-Keng domain.

\begin{lemma}\label{auto}
For any point $(\xi_0, z_0) \in \Omega$, there exists $\Phi\in$ Aut$(\Omega)$, such that $\Phi (\xi_0, z_0) = (\xi'_0, 0)$.
\end{lemma}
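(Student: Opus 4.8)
The plan is to prove Lemma \ref{auto}, which asserts that the automorphism group of the Hartogs domain $\Omega$ acts transitively enough to move any point $(\xi_0, z_0)$ to a point $(\xi'_0, 0)$ sitting over the origin of the base domain $D$. The natural strategy is to factor the task into two independent moves: first slide the base coordinate $z_0$ to the origin using an automorphism of $D$, then correct the resulting fiber coordinate if needed. The hypotheses on $D$ (irreducible bounded symmetric domain, or more generally bounded complete circular homogeneous Lu Qi-Keng) are exactly what guarantee each of these moves can be lifted to an honest biholomorphism of the total space $\Omega$.

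First I would use homogeneity of $D$ to pick $\psi \in \mathrm{Aut}(D)$ with $\psi(z_0) = 0$. The key point is that such a $\psi$ lifts to an automorphism $\Phi_1$ of $\Omega$ that simultaneously rescales the fiber. Concretely, because $\varphi(z) = K_D(z,\overline{z})^{-s}$ is built from the Bergman kernel, the transformation law $K_D(\psi(z), \overline{\psi(z)}) = |J_\psi(z)|^{-2} K_D(z,\overline{z})$ for the Jacobian $J_\psi$ of $\psi$ forces a clean relation between $\varphi(z)$ and $\varphi(\psi(z))$. One then checks that the map
\begin{equation}\label{lift}
\Phi_1(\xi, z) = \left( J_\psi(z)^{s} \, \xi, \ \psi(z) \right)
\end{equation}
preserves the defining inequality $\|\xi\|^2 < \varphi(z)$ and is biholomorphic; here $J_\psi(z)^s$ should be read as an appropriate single-valued branch (the circularity and completeness of $D$ ensure this is well defined on the relevant domain, which is precisely where the Lu Qi-Keng and complete-circular hypotheses enter in the general case). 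Applying $\Phi_1$ sends $(\xi_0, z_0)$ to a point of the form $(\xi_1, 0)$ with $\|\xi_1\|^2 < \varphi(0)$.

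Next I would handle the fiber. Over the origin the fiber is a Euclidean ball $\{\xi : \|\xi\|^2 < \varphi(0)\}$, and the statement only requires reaching some $(\xi'_0, 0)$, not a specific one; so in fact no further correction is strictly necessary and we may simply take $\xi'_0 = \xi_1$. (If one wanted to normalize further, say to $\xi'_0 = 0$, one would compose with a unitary-ball automorphism of the fiber fixing $z = 0$, which extends to $\Omega$ because the ball automorphisms act fiberwise and the condition at the origin is invariant.) Setting $\Phi = \Phi_1$ then yields the claim.

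The main obstacle is the global single-valuedness and well-definedness of the lift \eqref{lift}, namely making sense of the fractional power $J_\psi(z)^s$ for arbitrary real $s \notin \ZZ$ as a genuine holomorphic automorphism rather than a multivalued map. This is exactly the delicate point that distinguishes the present setting from the Nash-algebraic case and is where I expect to lean on the cited analysis: Proposition 2 in \cite{[WYZR]} for the irreducible bounded symmetric case, and the careful branch-tracking arguments of \cite{[YLR], [ABP]} to extend the conclusion to the bounded complete circular homogeneous Lu Qi-Keng setting. The circularity of $D$ guarantees that $0 \in D$ and that the isotropy at $0$ contains the circle action $\xi \mapsto e^{i\theta}\xi$, which is what controls the ambiguity in the branch and ultimately certifies that $\Phi_1$ descends to a bona fide element of $\mathrm{Aut}(\Omega)$.
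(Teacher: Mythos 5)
Your proposal is correct, but it takes a genuinely different route from the paper: the paper offers no argument at all for Lemma \ref{auto}, simply declaring it ``a consequence of Proposition 2 in \cite{[WYZR]}'' and remarking that the arguments of \cite{[YLR], [ABP]} extend it to the bounded, complete circular, homogeneous, Lu Qi-Keng setting. You instead reconstruct the explicit mechanism that underlies those references: pick $\psi\in\mathrm{Aut}(D)$ with $\psi(z_0)=0$ by homogeneity, and lift it to $\Phi_1(\xi,z)=\bigl((\det J_\psi(z))^{s}\xi,\ \psi(z)\bigr)$, which preserves $\|\xi\|^2<K_D(z,\overline z)^{-s}$ precisely because of the Bergman kernel transformation law $K_D(\psi(z),\overline{\psi(z)})\,|\det J_\psi(z)|^{2}=K_D(z,\overline z)$; and you correctly observe that no further fiber normalization is needed since the lemma only asks for \emph{some} $\xi_0'$. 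This is a complete and self-contained proof (modulo standard facts), and arguably an improvement on the paper's bare citation. One small correction to your accounting of hypotheses: the single-valuedness of $(\det J_\psi)^{s}$ has nothing to do with the Lu Qi-Keng property --- it follows from the non-vanishing of $\det J_\psi$ together with simple connectivity of $D$ (bounded symmetric domains are convex in the Harish--Chandra realization, and complete circular domains are star-shaped about $0$, so a holomorphic logarithm of $\det J_\psi$ exists; the residual ambiguity is a unimodular constant that does not affect $\|\cdot\|$). The Lu Qi-Keng hypothesis is needed elsewhere in the paper, to make sense of the fractional powers $K_D(z,\overline w)^{s}$ appearing in the polarized Bergman kernel formula, not for this lemma. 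Your parenthetical about further normalizing the fiber coordinate by a ``unitary-ball automorphism'' is also slightly off (sending $\xi_1$ to $0$ would require a M\"obius-type automorphism of the fiber ball, whose extension to $\Omega$ is a separate matter), but since you correctly note that step is unnecessary, it does not affect the proof.
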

 
 Now, we apply the argument in \cite{[LM2]} to show the algebraicity. 

\begin{lemma}\label{algebra}
Writing $S= \{\phi_1, \cdots, \phi_l \}= \{f_1, \cdots, f_n, g_1, \cdots, g_{d_0}, h_1, \cdots, h_d \}$, then there exists a maximal algebraic independent subset $\{\phi_1, \cdots, \phi_k\} \subset S$  over the field $\cal{R}$ of rational functions on $U$, and Nash algebraic functions $\hat\phi_j(t, X_1, \cdots, X_k)$,     such that $\phi_j(t) = \hat\phi_j(t, \phi_1(t), \cdots, \phi_k(t))$ for all $1 \leq j \leq l$ after
shrinking $U$ toward the origin if needed.
\end{lemma}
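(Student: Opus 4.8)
The plan is to treat this as a structural fact about the finitely generated field extension generated by $S$; the isometry equation (\ref{isometry}) plays no direct role here and enters only in the subsequent application of Lemma \ref{cont}. Regard each $\phi_i$ as an element of the field $\mathcal{M}(U)$ of meromorphic functions on $U$, which contains $\mathcal{R}$. First I would consider the subfield $\mathcal{R}(\phi_1, \ldots, \phi_l) \subset \mathcal{M}(U)$, a finitely generated extension of $\mathcal{R}$. Since a maximal algebraically independent subset of a generating set is automatically a transcendence basis, I can extract from $S$ itself a subset, relabeled $\{\phi_1, \ldots, \phi_k\}$, that is maximal algebraically independent over $\mathcal{R}$: indeed, for each $j>k$ maximality forces $\phi_j$ to be algebraic over $\mathcal{R}(\phi_1,\ldots,\phi_k)$, so the whole field is algebraic over $\mathcal{R}(\phi_1,\ldots,\phi_k)$ and this subset is a transcendence basis of $\mathcal{R}(\phi_1, \ldots, \phi_l)$ over $\mathcal{R}$.

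Next, set $L := \mathcal{R}(\phi_1, \ldots, \phi_k)$. For every $1 \le j \le l$ the element $\phi_j$ is algebraic over $L$; let $Q_j(Y) \in L[Y]$ be its minimal polynomial. Clearing denominators produces a polynomial $P_j(t, X_1, \ldots, X_k, Y)$ with coefficients polynomial in $t, X_1, \ldots, X_k$ and with $P_j(t, \phi_1(t), \ldots, \phi_k(t), \phi_j(t)) \equiv 0$ on $U$. Because we are in characteristic zero, $Q_j$ is separable, so $\partial P_j / \partial Y$ does not vanish identically along the graph $t \mapsto (t, \phi_1(t), \ldots, \phi_k(t), \phi_j(t))$; equivalently, the discriminant of $P_j$ in $Y$ is a nonzero element of $L$ and hence a meromorphic function of $t$ that vanishes only on a discrete subset of $U$.

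Finally, I would pass from these abstract algebraic relations to honest Nash algebraic functions. Choose a base point, which after a translation I take to be the origin of $U$, lying off the discrete locus where any such discriminant vanishes or where any $\phi_i$ has a pole; there $\partial_Y P_j(0, \phi_1(0), \ldots, \phi_k(0), \phi_j(0)) \ne 0$ for all $j$. The holomorphic implicit function theorem then yields, near $(0, \phi_1(0), \ldots, \phi_k(0))$, a holomorphic solution $Y = \hat\phi_j(t, X_1, \ldots, X_k)$ of $P_j = 0$ with $\hat\phi_j(0, \phi_1(0), \ldots, \phi_k(0)) = \phi_j(0)$; since $\hat\phi_j$ satisfies the polynomial identity $P_j(t, X_1,\ldots,X_k, \hat\phi_j) = 0$, it is Nash algebraic by definition. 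Shrinking $U$ toward the origin so that the curve $t \mapsto (t, \phi_1(t), \ldots, \phi_k(t))$ stays in the domain of $\hat\phi_j$, both $\phi_j(t)$ and $\hat\phi_j(t, \phi_1(t), \ldots, \phi_k(t))$ are holomorphic, satisfy the same branch of $P_j = 0$, and agree at the origin, so they coincide on the shrunk $U$ by the uniqueness in the implicit function theorem together with the identity theorem. I expect the only genuine obstacle to be this last, analytic step: the algebraic function defined by $P_j$ is multivalued with branch points, so the real content of the lemma is the selection of the single branch matching $\phi_j$ and the guarantee that it remains holomorphic on a full neighborhood of the base point, which is precisely what forces the shrinking of $U$ toward the origin.
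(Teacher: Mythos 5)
Your proof is correct and takes essentially the same route as the paper, which merely names the ingredients (transcendence basis of $\mathcal{R}(S)/\mathcal{R}$ extracted from $S$, then the algebraic version of the implicit function theorem) and defers all details to Loi--Mossa \cite{[LM2]}; you supply precisely those details (minimal polynomials, separability, non-vanishing of the discriminant off a discrete set, branch selection and the identity theorem). The one point to watch is your relocation of the base point to a generic point ``after a translation'': the paper's subsequent argument needs the distinguished origin where $G(0)=(\xi_0,0)$ and $h_j(0)=0$, so the normalizations should be performed after the generic point is chosen --- a bookkeeping issue the paper's own terse proof also leaves implicit.
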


\begin{proof}
The proof is identical to the argument in \cite{[LM2]} and we include the proof here for completeness. Let $k$ be the transcendence degree of the field extension $\cal{R}(S) / \cal{R}$. If $k=0$, then every element in $S$ is holomorphic Nash algebraic. Otherwise, we have a maximal algebraic independent subset over $\cal R$, denoted by $\phi_1, \cdots, \phi_k$. By the algebraic version of the existence and uniqueness of the implicit function theorem, there exist a connected open set, still denoted by $U$ and Nash algebraic functions $\hat\phi_j(t, X_1, \cdots, X_k)$, defined in a neighborhood $\hat U$ of $\{(s, \phi_1(s), \cdots, \phi_k(s)) | s\in U \}$,
 such that $\phi_j(t) = \hat\phi_j(t, \phi_1(t), \cdots, \phi_k(t))$ for all $s\in U$ and $1 \leq j \leq l$.
 \end{proof}

We are now in a position to prove the main result.
\medskip

{\bf Proof of Theorem \ref{main1}:}

Without loss of generality, we assume $0 \in U$. By composing $F$ with Aut$(\mathbb{C}^n)$ and $G$ with $\Phi \in$ Aut$(\Omega)$, we may assume $F(0)=0, G(0)=(\xi_0, 0)$. Since the metrics are invariant under automorphisms, the equation (\ref{isometry}) still holds. We still denote the maps by $F$ and $G$.
Let $F=(f_1,
\cdots, f_n): D \rightarrow \CC^n,\ \  G=(g_1, \cdots, g_{d_0}, h_1, \cdots, h_d): U
\rightarrow \Omega$ be holomorphic
maps. 
We argue by
contradiction. Suppose that $F$ is not constant. 
By equation (\ref{isometry}), we  have

$$\partial\bar\partial \left(\sum_{i=1}^n |f_i(s)|^2 \right)=  \mu \partial\bar\partial \log K(G(s), \overline{G(s)})~~\text{for~~}s \in U,$$
where $K((\xi, z), \overline{(\eta ,w)})$ 
is the Bergman kernel on $\Omega$. 
Since $\Omega$ is a complete circular domain, by the standard argument (cf. \cite{[Mo1], [HY2]}), 
  $K((\xi, z), \overline{(\eta ,w)})$ does not contain any nonconstant pure holomorphic terms in $(\xi, z)$.  
By comparing the
 pure holomorphic and anti-holomorphic terms in $(\xi, z)$ (cf. \cite{[CU]}), one can get rid of $\partial\bar\partial$ to obtain the following functional identity:

\begin{equation}\label{function}
\sum_{i=1}^n |f_i(s)|^2 = \mu \log K(G(s),
\overline{G(s)}) ~~\text{~for~any~}s \in U.
\end{equation}
%
%
After polarization, (\ref{function}) is equivalent to
\begin{equation}\label{polarization}
\sum_{i=1}^n f_i(s) \bar{f_i}(t) =\mu \log
K(G(s), \bar{G}(t)) ~~\text{~for~~}(s, t) \in U\times
\hbox{conj}({U}),
\end{equation}
where $\bar{f}_i(s) = \overline{f_i(\overline{s})}$ and
$\hbox{conj}({U})=\{z \in \CC | \bar s \in U\}$.

Define $$\Psi(s, X, t) = \sum_{i=1}^n \hat f_i(s, X) \bar f_i(t)-
 \mu \log K((\hat g_1(s, X), \cdots, \hat h_{d}(s, X)), (\bar g_1(t), \cdots, \bar h_d(t)))$$
and
$$\Psi_i (s, X, t)= \frac{\partial^i \Psi }{\partial t^i} (s, X, t)$$ for $(s, X, t) \in \hat U \times \hbox{conj}(U)$,
where $X=(X_1, \cdots, X_k)$.

\begin{lemma}\label{quantity}
For any $t$ near 0 and any $(s, X)\in \hat U$, $\Psi(s, X, t) \equiv \Psi(s, X, 0)$.
\end{lemma}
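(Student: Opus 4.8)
The plan is to prove that every positive-order $t$-derivative of $\Psi$ vanishes at $t=0$, so that the Taylor expansion of $\Psi$ in $t$ degenerates to its value at $t=0$. First I would substitute the algebraic representations of Lemma \ref{algebra}, namely $f_i(s)=\hat f_i(s,\phi(s))$, $g_a(s)=\hat g_a(s,\phi(s))$, $h_b(s)=\hat h_b(s,\phi(s))$ with $\phi(s)=(\phi_1(s),\dots,\phi_k(s))$, into the polarized identity (\ref{polarization}). This gives $\Psi(s,\phi(s),t)\equiv 0$ on $\hat U\times\mathrm{conj}(U)$. Since $\phi(s)$ does not depend on $t$, differentiating in $t$ and setting $t=0$ yields $\Psi_i(s,\phi(s),0)=0$ for every $i\ge 0$; that is, each $\Psi_i(\cdot,\cdot,0)$ vanishes on the graph $\{(s,\phi(s)):s\in U\}$. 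It then remains to upgrade this to identical vanishing for $i\ge 1$.

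The key step is to show that, for each $i\ge 1$, the function $(s,X)\mapsto\Psi_i(s,X,0)$ is holomorphic Nash algebraic, and this is exactly where the non-algebraicity of the Bergman kernel must be confronted. Writing $\log K=(d_0s+1)\log K_D(z,\bar w)+\log R(\tau)$ with $\tau=K_D(z,\bar w)^s\langle\xi,\eta\rangle$ and $R$ the rational function read off from (\ref{B kernel}), the transcendental factors $K_D^{d_0s+1}$ and $K_D^s$ are precisely what prevent $K$ from being Nash algebraic when $s\notin\ZZ$. However, the normalization $G(0)=(\xi_0,0)$ furnished by Lemma \ref{auto} forces the base component to satisfy $\bar h(0)=0$, and since $D$ is a bounded symmetric domain in its Harish-Chandra realization, $K_D(\,\cdot\,,0)$ is constant. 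Hence, upon setting $t=0$, the offending powers $K_D^{d_0s+1}$ and $K_D^s$ collapse to constants, while by Fa\`a di Bruno every $t$-derivative of $\log K_D$ and of $\log R(\tau)$ at $t=0$ becomes a rational expression in the Nash algebraic functions $\hat h_b(s,X),\hat g_a(s,X)$ and the constants $\bar h_b^{(j)}(0),\bar g_a^{(j)}(0)$. The Euclidean term contributes $\sum_\ell\hat f_\ell(s,X)\,\bar f_\ell^{(i)}(0)$, which is manifestly Nash algebraic. Therefore $\Psi_i(s,X,0)$ is Nash algebraic for all $i\ge 1$.

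Finally, I would invoke the algebraic-independence mechanism underlying Lemma \ref{algebra}: a nonzero holomorphic Nash algebraic function $\Theta(s,X)$ satisfies an irreducible polynomial equation $P(s,X,\Theta)=0$, and if $\Theta(s,\phi(s))\equiv 0$ then $P(s,\phi(s),0)\equiv 0$, which is a polynomial relation among $\phi_1(s),\dots,\phi_k(s)$ with coefficients in the field $\cal R$ of rational functions on $U$. Since $\phi_1,\dots,\phi_k$ are algebraically independent over $\cal R$, this forces $P(s,X,0)\equiv 0$ as a polynomial, so $Y\mid P$ and, by irreducibility, $\Theta\equiv 0$. Applying this to each $\Theta=\Psi_i(\cdot,\cdot,0)$ with $i\ge 1$ gives $\Psi_i(s,X,0)\equiv 0$ on $\hat U$. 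The Taylor expansion $\Psi(s,X,t)=\Psi(s,X,0)+\sum_{i\ge 1}\frac{t^i}{i!}\Psi_i(s,X,0)$ then reduces to $\Psi(s,X,t)\equiv\Psi(s,X,0)$, as claimed.

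The hard part will be the second paragraph: verifying that passing to $t=0$ genuinely removes the transcendental powers $K_D^s$ so that every $\Psi_i(\cdot,\cdot,0)$ lands in the Nash algebraic category. This is the crucial difference from \cite{[CH], [HY2]}, where $K$ itself is Nash algebraic; the role of the normalization $\bar h(0)=0$ provided by Lemma \ref{auto}, together with the fact that $K_D(z,\bar w)$ loses its anti-holomorphic dependence at the base origin, is precisely what renders the derivatives algebraic, and this must be tracked carefully through the Fa\`a di Bruno expansions of $\log K_D$ and $\log R(\tau)$.
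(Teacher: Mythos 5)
Your proposal is correct and follows essentially the same route as the paper: show each $\Psi_i(\cdot,\cdot,0)$, $i\ge 1$, is Nash algebraic because the normalization $h(0)=0$ makes $K_D(\,\cdot\,,\overline{h(0)})\equiv 1$ and collapses the transcendental powers, then use the algebraic independence of $\phi_1,\dots,\phi_k$ over $\cal R$ to upgrade vanishing on the graph $\{(s,\phi(s))\}$ to identical vanishing. Your minimal-polynomial/irreducibility phrasing of the last step is just a minor variant of the paper's ``constant, hence zero'' argument, and your Fa\`a di Bruno discussion supplies detail the paper leaves implicit.
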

\begin{proof}
It suffices to show that $\Psi_i (s, X, 0) \equiv 0$ for all $i >0$.
Since $K_D(z, \bar w)$ is a rational function and also $K_D((\hat h_1(s, X), \cdots, \hat h_d(s, X)), \overline{(h_1(0), \cdots h_d(0))} ) = K_D((\hat h_1(s, X), \cdots, \hat h_d(s, X)), (0, \cdots, 0))  \equiv 1$, then  $\Psi_i (s, X, 0)$ is a Nash algebraic function
 in $(s, X)$. Assume it is not constant.  Then there exists a holomorphic polynomial $P(s, X, y)=A_d(s, X)y^d + \cdots + A_0(s, X)$ of degree $d$ in $y$, with $A_0(s, X) \not\equiv0$ such that $P(s, X, \Psi_i(s, X, 0)) \equiv 0$.
As $\Psi(s, \phi_1(s), \cdots, \phi_k(s), t) \equiv 0$ for $s\in {U}$, it
follows that $\Psi_i(s, \phi_1(s), \cdots, \phi_k(s), 0) \equiv 0$ and
therefore $A_0(s, \phi_1(s), \cdots, \phi_k(s)) \equiv 0$. This means that
$\{\phi_1(s), \cdots, \phi_k(s)$\} are algebraic dependent over
$\cal{R}$. This is a contradiction and it follows that $\Psi_i (s, X, 0)$ is a constant. 
Therefore, $\Psi_i (s, X, 0) = \Psi_i(s, \phi_1(s), \cdots, \phi_k(s), 0) \equiv 0.$

\end{proof}

\begin{lemma}\label{nonconstant}

There exists $(s_0, t_0) \in U \times \hbox{conj}(U)$ with $t_0\not=0$ such that
\begin{equation}\notag
\sum_{i=1}^n \hat f_i(s_0, X) \bar f_i(t_0) \not\equiv \text{constant}.\end{equation}
\end{lemma}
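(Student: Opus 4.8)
The plan is to argue by contradiction: suppose that for every $s_0\in U$ and every $t_0\in\mathrm{conj}(U)$ with $t_0\neq0$ the Nash algebraic function $X\mapsto\sum_{i=1}^{n}\hat f_i(s_0,X)\bar f_i(t_0)$ is constant, and deduce that $F$ is constant, contradicting our assumption. (If $\mu=0$ then $F$ is already constant, so assume $\mu\neq0$.) First I would reduce to the case that $f_1,\dots,f_n$ are linearly independent over $\CC$: since $\sum_i|f_i|^2$ is a positive semidefinite Hermitian form in a maximal linearly independent subset, one writes its Gram matrix as $B^*B$ and replaces $F$ by $B\,(f_1,\dots,f_r)^{\mathrm t}$; this changes neither $F^*\omega_{\CC^n}$ nor the field $\mathcal R(S)$, so Lemmas \ref{algebra}--\ref{quantity} remain available after relabeling. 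Assuming independence, I differentiate the supposed identity in each $X_j$ to get $\sum_i\partial_{X_j}\hat f_i(s,X)\,\bar f_i(t)\equiv0$ for all $t\in\mathrm{conj}(U)$ with $t\neq0$; being holomorphic in $t$ it then vanishes for all $t$, so linear independence of $\{\bar f_i(t)\}_i$ forces $\partial_{X_j}\hat f_i\equiv0$ for all $i,j$. Hence each $\hat f_i$ is independent of $X$ and $f_i(s)=\hat f_i(s)$ is Nash algebraic in $s$ alone.

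Next I would feed this into Lemma \ref{quantity}. Because $\hat f_i$ is now independent of $X$ and $\bar f_i(0)=0$, the identity $\Psi(s,X,t)\equiv\Psi(s,X,0)$ reads, after fixing $t_0\neq0$ and exponentiating,
\[
\exp\Big(\tfrac{1}{\mu}\sum_{i=1}^{n} f_i(s)\bar f_i(t_0)\Big)
= K_D\big(\hat h(s,X),\bar h(t_0)\big)^{\,d_0 s+1}\,\frac{R(\tau_1)}{R(\tau_0)},
\]
where $R$ is the rational function in (\ref{B kernel}), $\hat h=(\hat h_1,\dots,\hat h_d)$, $\hat g=(\hat g_1,\dots,\hat g_{d_0})$, $\tau_1=K_D(\hat h(s,X),\bar h(t_0))^{s}\langle\hat g(s,X),\bar g(t_0)\rangle$ and $\tau_0=\langle\hat g(s,X),\bar g(0)\rangle$, using $K_D(\,\cdot\,,0)\equiv1$. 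Here $s$ is the fixed Hartogs exponent, so $s$ and $d_0s+1$ are fixed real numbers, while $K_D(\hat h(s,X),\bar h(t_0))$, $\langle\hat g(s,X),\bar g(t_0)\rangle$, $R(\tau_0)$ and the left exponent $\frac1\mu\sum_i f_i(s)\bar f_i(t_0)$ are all Nash algebraic in $(s,X)$ on $\hat U$. Thus the right-hand side has exactly the shape $R(H_1^{\mu_1}H_2^{\mu_2})H_3^{\mu_3}H_4^{\mu_4}$ of Lemma \ref{cont}, with $H_1=H_3=K_D(\hat h,\bar h(t_0))$, $H_2=\langle\hat g,\bar g(t_0)\rangle$, $H_4=R(\tau_0)$ and $(\mu_1,\mu_2,\mu_3,\mu_4)=(s,1,d_0s+1,-1)$. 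Lemma \ref{cont} then forces $\sum_i f_i(s)\bar f_i(t_0)$ to be constant on $\hat U$, in particular constant in $s$; evaluating at $s=0$, where $f_i(0)=0$, gives $\sum_i f_i(s)\bar f_i(t_0)\equiv0$ for every $t_0$, hence $\sum_i f_i(s)\bar f_i(t)\equiv0$, and setting $t=\bar s$ yields $F\equiv\text{const}$, the desired contradiction.

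The hard part is the passage in the second paragraph. Since $s$ is in general irrational, the factors $K_D^{\,d_0s+1}$ and $K_D^{\,s}$ make the Bergman kernel of $\Omega$ fail to be Nash algebraic, so the classical rigidity for rational or Nash algebraic kernels cannot be invoked directly. The identity can be put into the exact form required by Lemma \ref{cont} only after both the linear reduction of the first paragraph, which makes the left exponent Nash algebraic in $s$, and the substitution of the possibly transcendental components $h,g$ of $G$ by the Nash algebraic parametrizations $\hat h(s,X),\hat g(s,X)$ of Lemma \ref{algebra}, which renders every $H_j$ Nash algebraic and confines all non-algebraicity to the fixed real powers $s$ and $d_0s+1$. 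The remaining technical points are the analytic continuation in $t$ used in the first paragraph and the verification that $H_4=R(\tau_0)$ is a nonvanishing Nash algebraic function near the relevant locus, so that $H_4^{-1}$ is legitimate.
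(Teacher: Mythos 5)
Your argument is correct, but it takes a genuinely different and much heavier route than the paper's. The paper's own proof is a two-line substitution: assuming that $\sum_{i}\hat f_i(s_0,X)\bar f_i(t_0)$ is constant in $X$ for every admissible $(s_0,t_0)$, it sets $t=\bar s$ and $X=(\phi_1(s),\dots,\phi_k(s))$ to read off $\sum_i|f_i(s)|^2\equiv\text{const}$, hence $f_i\equiv 0$ (using $F(0)=0$), contradicting the nonconstancy of $F$. As written, that substitution only yields $\sum_i|f_i(s)|^2=c(s,\bar s)$ with a ``constant'' that a priori depends on the chosen point $(s_0,t_0)$, so the paper's proof is extremely terse at best; your longer argument in effect supplies what it leaves implicit. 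Concretely, you (i) normalize so that $f_1,\dots,f_n$ are linearly independent, (ii) differentiate the assumed identity in $X_j$ and use linear independence of $\{\bar f_i(t)\}_i$ to force $\partial_{X_j}\hat f_i\equiv0$, so each $f_i$ becomes Nash algebraic in $s$, and (iii) feed the exponentiated identity from Lemma \ref{quantity} into Lemma \ref{cont} a second time to get $\sum_i f_i(s)\bar f_i(t_0)\equiv0$ and hence $F\equiv0$. Step (iii) duplicates the closing step of the main proof, so your route shows that both horns of the dichotomy ``$H$ constant in $X$ / $H$ nonconstant in $X$'' lead to the same contradiction; it also covers the degenerate case where the transcendence degree $k$ is $0$ (no variables $X$ at all), which the paper's substitution does not address. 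What you buy with the extra work is robustness; what it costs is the need to verify that the linear reduction in (i) preserves $\sum_i|f_i|^2$, the field $\mathcal{R}(S)$, and the availability of Lemmas \ref{algebra}--\ref{quantity} (it does, since the Gram matrix of a maximal linearly independent subset is positive definite, so the change of frame is invertible), and the two technical points you already flag, namely the extension of the vanishing to $t=0$ by holomorphy and the nonvanishing of $R(\tau_0)$ near the graph of $(\phi_1,\dots,\phi_k)$, which holds after shrinking $\hat U$ because $K(G(s),\overline{G(0)})\neq0$ there.
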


\begin{proof}
Assume the conclusion is false. Letting $t=\overline{s}$ and $X=(\phi_1(s), \cdots, \phi_k(s))$, 
it follows that

$$\sum_{i=1}^n |f_i(s)|^2 = \sum_{i=1}^n \hat f_i(s, \phi_1(s), \cdots, \phi_k(s)) \bar f_i(\overline{s}) \equiv \text{constant},\ \hbox{over}\ U.$$ This implies that $f_i(s) \equiv 0$ for all $1 \leq i \leq n$ and therefore contradicts to the assumption that $F=(f_1, \cdots, f_n)$ is a non-constant map. 
\end{proof}


\bigskip



Choosing $s_0, t_0$ as in Lemma  \ref{nonconstant}, 
$\sum_{i=1}^n \hat f_i(s_0, X) \bar f_i(t_0)$ is a nonconstant
holomorphic Nash algebraic function in $X$. It follows from Lemma \ref{quantity} that 
$$e^{\frac{1}{\mu} \sum_{i=1}^n \hat f_i(s_0, X) \bar f_i(t_0)} = \frac{K((\hat g_1(s_0, X), \cdots, \hat h_{d}(s_0, X)), (\bar g_1(t_0), \cdots, \bar h_d(t_0)))}{ K((\hat g_1(s_0, X), \cdots, \hat h_{d}(s_0, X)), (0, \cdots, 0, \bar h_1(0), \cdots, \bar h_d(0)))}.$$
Note that this is precisely in the form of the equation (\ref{form}).
Hence we arrive at a contradiction by Lemma \ref{cont}. Thus
$F$ must be a constant map. This is again a contradiction and it completes the proof of Theorem \ref{main1}.


\end{document}